\def\R{{\mathbb R}}
\def\*S{{\mathbb S}}
\newcommand{\cal}{\mathcal}
\DeclareMathOperator{\vol}{vol}
\DeclareMathOperator*{\esssup}{ess\;sup}
\newtheorem{theorem}{Theorem}
\newtheorem{proposition}{Proposition}[section]
\newtheorem{lemma}[proposition]{Lemma}
\begin{document}

\title{On a perturbation method for stochastic parabolic PDE.}

\author{Peter L. Polyakov}
\address{University of Wyoming \\ Department of Mathematics \\ 1000 E University Ave
\\ Laramie, WY 82071}
\curraddr{}
\email{polyakov@uwyo.edu}
\thanks{Author was partially supported by the NEUP program of
the Department of Energy}
\def\DATE{Version 3.\ Started 05.14.13}

\subjclass[2010]{Primary 65C30, 35K20}
\keywords{Stochastic parabolic differential equation, Karhunen-Lo\`eve expansion}

\maketitle

\section{Introduction.}\label{intro}

In the present article we address two issues related to the perturbation method introduced
by Zhang and Lu in \cite{ZL}, and applied  to solving linear stochastic parabolic PDE. Those issues are:
the construction of the perturbation series, and its convergence. One of the problems considered
by Zhang and Lu is the following boundary value problem in a domain $D\subset \R^d$
\begin{equation}\label{BVP}
\begin{aligned}
&u_t-\nabla_x\cdot\left(e^{Y(x,\omega)}\nabla_x u(\omega,x,t)\right)=g(x,t),\\
&u(x,0)=u_0(x),\\
&u(x,t)\Big|_{bD}=0,
\end{aligned}
\end{equation}
where $Y(x,\omega)=Y_0(x)+Y^{\prime}(x,\omega)$ is a random process depending
on the spatial variable $x$ and on the random variable $\omega\in \Omega$
with $Y_0(x)=\langle Y\rangle$.\\
\indent
In their method Zhang and Lu consider two series representations: the first one is the
Taylor series representation of $e^x$ with respect to $x$, which combined with the Karhunen-Lo\`eve expansion of $Y^{\prime}(x,\omega)$ produces a series representation of $e^{Y(x,\omega)}$
with respect to random variables, and the second one is the series representation of the solution
$u(\omega,x,t)$ with respect to the same random variables. The method of expanding
an equation with respect to random variables from the Karhunen-Lo\`eve expansion has been
applied earlier within the framework of the {\it polynomial chaos} (see \cite{GS, X}). However,
in their work Zhang and Lu do not use the basis of mutually orthogonal polynomials in the space of all
polynomials of random variables, but rather the set of all monomials. The naturally arising question
in the case of a nonorthogonal basis is the uniqueness of the resulting series representation.
We address this question in Theorem~\ref{Independence}, where we prove the uniqueness
of the power series representation \eqref{Series} of a solution of \eqref{BVP}.

\indent
Another natural question arising in any asymptotic expansion is the question of convergence.
We address this question in Theorem~\ref{Estimate}, in which we prove the convergence
of the moments for the asymptotic series of a solution of problem \eqref{BVP} under
the assumption of smallness of the variance of $Y^{\prime}$.
Estimate \eqref{xi-Estimate} on the one hand shows that for the small enough variance
of $Y^{\prime}$ the resulting perturbation series \eqref{Series}
for a solution $u(\omega,x,t)$ converges in the probabilistic sense.
On the other hand the constant $C$ in this estimate increases with the increase of
the number of terms of Karhunen-Lo\`eve expansion used to approximate the random
process $Y^{\prime}(x,\omega)$. Therefore, an priori estimate \eqref{xi-Estimate}
of the rate of convergence of perturbation series \eqref{Series} becomes
worse as the number of terms in the Karhunen-Lo\`eve expansion increases.
We present the explaining argument in section~\ref{TheoreticalRate}.\\
\indent
The author would like to thank Victor Ginting and Don Estep for useful discussions on the
topics of the present article.

\section{Perturbation Series Expansion.}

\indent
Following Zhang and Lu \cite{ZL} we assume that $Y^{\prime}$ in \eqref{BVP} is a Gaussian random
process with the covariance function $C(x,y)$ defined on $\R^d\times \R^d$ by
\begin{equation}\label{Covariance}
C(x,y)=\sigma^2\exp\left(\frac{-|x-y|^2}{2\eta^2}\right).
\end{equation}
Then the following Karhunen-Lo\`eve expansion of $Y^{\prime}$ (\cite{CH, L})
provides an approximation
of $Y^{\prime}(x,\omega)$ in $L^2\left(\Omega\right)$ uniformly with respect to $x$:
\begin{equation}\label{K-L}
Y^{\prime}(x,\omega) = \sum_{i=1}^\infty \sqrt{\lambda_i} f_i(x) \xi_i(\omega),
\end{equation}
where $\left\{\xi_i(\omega)\right\}_1^{\infty}$ are mutually independent  Gaussian random
variables with zero mean, $\left\{\lambda_i\right\}_1^{\infty}$ is the set of eigenvalues
of the integral operator
\begin{equation*}
{\cal C}[f](x)=\int_D C(x,y)f(y)dy,
\end{equation*}
and $\left\{f_i\right\}_1^{\infty}$ is the set of normalized in $L^2(D)$ eigenfunctions.\\
\indent
Since the covariance function satisfies condition $C(x,x)=\sigma^2$,
from equality \eqref{K-L} using equality
$$\int_DC(x,x)dx=\sigma^2\cdot\vol(D),$$
we obtain the equality
\begin{equation*}
\sigma^2\cdot\vol(D)=\int_D C(x,x)dx
=\int_Ddx\int_{\Omega}Y^{\prime}(x,\omega)\cdot Y^{\prime}(x,\omega)d\omega
=\sum_{i=1}^{\infty}\lambda_i.
\end{equation*}
\indent
Then for the $k$-th partial sum of \eqref{K-L}
\begin{equation} \label{K-L-partial}
Y_k(x,\omega) = \sum_{i=1}^k \sqrt{\lambda_i} f_i(x) \xi_i(\omega).
\end{equation}
using the Cauchy-Schwarz inequality we obtain the following estimate:
\begin{equation}\label{Y-partial-Estimate}
\left|Y_k(x,\omega)\right|\leq \left(\sum_{i=1}^k\lambda_i\right)^{1/2}
\cdot\left(\sum_{i=1}^k\xi_i^2(\omega) f_i^2(x)\right)^{1/2}
\leq\sigma\cdot\left(\vol(D)\cdot\sum_{i=1}^k\xi_i^2(\omega) f_i^2(x)\right)^{1/2}.
\end{equation}
\indent
Apparently inspired by the estimate above, Zhang and Lu assume in \cite{ZL} that problem \eqref{BVP}
with $Y^{\prime}(x,\omega)$ replaced by $Y_k(x,\omega)$ admits a solution in the form
of a perturbation series
\begin{equation}\label{Series}
u(\omega,x,t)=u^{(0)}(x,t)+u^{(1)}(\omega,x,t)+\cdots,
\end{equation}
with the \lq\lq order of magnitude of $u^{(m)}$\rq\rq\ being proportional to $\sigma^m$.

\indent
To make the order of term $u^{(m)}$ more precise we follow \cite{ZL},
and assume that term $u^{(m)}$ is a homogeneous polynomial of order $m$ with respect
to variables $\xi_1,\dots,\xi_k$, i.e.
\begin{equation}\label{u_homogeneous}
u^{(m)}=\sum_{|I|=m}a_I(x,t)\xi^I,
\end{equation}
where $I=(i_1,\dots,i_k)$ is a multiindex, $|I|=i_1+\cdots+i_k$, and $\xi^I=\xi_1^{i_1}\cdots\xi_k^{i_k}$.\\
\indent
To obtain the uniqueness of a solution of the form \eqref{Series} we prove the linear
independence of products of mutually independent Gaussian random variables.

\begin{theorem}\label{Independence} Let
\begin{equation}\label{sum}
S(\xi)=\sum_{|I|=0}^n a_I\xi^J=\sum_{|I|=0}^n a_I\xi_1^{i_1}\cdots\xi_k^{i_k}
\end{equation}
be a linear combination of products of mutually independent standard normal random variables
$\xi_1,\dots,\xi_k$, where $I=(i_1,\dots,i_k)$ is a multiindex, and $|I|=i_1+\cdots+i_k$.\\
\indent
If $S(\xi)=0$, then $a_I=0$ for all multiindices in the sum.
\end{theorem}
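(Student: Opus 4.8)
The plan is to read the identity $S(\xi)=0$ as an equality of random variables, i.e.\ as an almost sure equality with respect to the joint law of $(\xi_1,\dots,\xi_k)$, and to show that no nontrivial polynomial in independent standard normal variables can vanish almost surely. Equivalently, viewing $S$ as a genuine polynomial $S(x_1,\dots,x_k)=\sum_{|I|\le n} a_I x^I$ in indeterminates $x_1,\dots,x_k$, I would prove that if $S(\xi)=0$ almost surely then $S$ is the zero polynomial, so that all $a_I=0$. I would argue by induction on the number $k$ of variables, exploiting that independence makes the joint law of $(\xi_1,\dots,\xi_k)$ the product of the one-dimensional standard Gaussian laws.

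For the base case $k=1$, the identity $S(\xi_1)=\sum_{j=0}^n a_j\xi_1^j=0$ almost surely says that the polynomial $p(x)=\sum_{j=0}^n a_j x^j$ vanishes on a set of full Gaussian measure. If $p$ were not the zero polynomial it would have at most $n$ real roots, so its zero set would be finite and hence of Gaussian measure zero, contradicting that it has full measure. Thus $p\equiv 0$ and every $a_j=0$. This is the only place where an analytic property of the Gaussian—that its law assigns measure zero to finite sets—enters.

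For the inductive step I would organize $S$ as a polynomial in the last variable,
\begin{equation*}
S(\xi)=\sum_{j=0}^{n}\xi_k^{\,j}\,P_j(\xi_1,\dots,\xi_{k-1}),
\end{equation*}
where each $P_j$ is a polynomial in the remaining $k-1$ variables whose coefficients are among the $a_I$. Because the law of $(\xi_1,\dots,\xi_k)$ is a product measure, Fubini's theorem lets me fix $(\xi_1,\dots,\xi_{k-1})$ outside a null set and read the displayed expression, for each such fixed value, as a univariate polynomial in $\xi_k$ that vanishes almost surely in $\xi_k$. The base case then forces $P_j(\xi_1,\dots,\xi_{k-1})=0$ for every $j$ and for almost every $(\xi_1,\dots,\xi_{k-1})$. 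Each $P_j$ is therefore a polynomial in $k-1$ independent standard normals vanishing almost surely, and the induction hypothesis gives that all of its coefficients vanish; ranging over $j$ this yields $a_I=0$ for every $I$.

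I expect the only delicate point to be the careful bookkeeping of the almost-sure statements across the Fubini step—ensuring that the exceptional null sets in $(\xi_1,\dots,\xi_{k-1})$ are combined correctly over the finitely many indices $j$—rather than anything deep. An alternative route that avoids the induction is to expand each monomial $\xi^I$ in the orthogonal basis of products of Hermite polynomials $\prod_l H_{i_l}(\xi_l)$; the change of basis is unitriangular in each variable, so the orthogonality relations $E\big[\prod_l H_{i_l}(\xi_l)\prod_l H_{j_l}(\xi_l)\big]=\prod_l i_l!\,\delta_{i_l j_l}$ recover each coefficient of $S$ and force it to vanish. I would present the inductive argument as the primary proof, since it requires no special-function machinery and rests only on the positivity of the Gaussian density.
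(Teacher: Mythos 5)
Your proof is correct, but it takes a genuinely different route from the paper's. The paper argues via moments: it singles out a lexicographically extremal multiindex $J$ among those with $a_I\neq 0$, constructs a test monomial $\xi^M$ (using the Gaussian moment formula $\langle\xi^n\rangle=n!!$ and choosing the entries of $M$ large, with appropriate parities) so that the term $a_J\langle\xi^M\xi^J\rangle$ strictly dominates the sum of all other terms $|a_I\langle\xi^M\xi^I\rangle|$, and then concludes $a_J=0$ from $\langle\xi^M S(\xi)\rangle=0$, iterating until all coefficients are eliminated. You instead observe that the joint law of $(\xi_1,\dots,\xi_k)$ is absolutely continuous with a positive density, so $S(\xi)=0$ almost surely forces the polynomial $S(x)$ to vanish on a set that no nonzero polynomial can annihilate; your Fubini induction on $k$, reducing to the fact that a nonzero univariate polynomial has finitely many roots, is a clean way to package this. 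Your argument is more elementary (no moment computations or double-factorial estimates) and more general: it uses nothing about Gaussianity beyond the product structure and the atomlessness of each marginal, whereas the paper's argument is tied to explicit Gaussian moments. What the paper's approach buys in exchange is an explicit linear functional (integration against $\xi^M\phi$) that isolates each coefficient, which is closer in spirit to the Hermite-orthogonality alternative you sketch at the end. One small remark: the bookkeeping you flag as delicate is in fact harmless, since the Fubini exceptional set in $(\xi_1,\dots,\xi_{k-1})$ is a single null set that works simultaneously for all the coefficient polynomials $P_j$, so no combination over $j$ is needed.
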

\begin{proof}
Let ${\cal I}$ denote the set of all multiindices in the sum \eqref{sum}, for which
$a_I\neq 0$. Let $j_1=\max_{\cal I}\{i_1\}$. If $i_1=j_1$
for several multiindices in the sum \eqref{sum}, then we denote the set of those multiindices
by ${\cal I}_1$ and select among those the subset ${\cal I}_2$ consisting of the multiindices
of the terms with $i_2=j_2\stackrel{\rm def}{=}\max_{{\cal I}_1}\{i_2\}$.\\
\indent
Continuing this selection process we construct sets of multiindices
${\cal I}_1\supseteq{\cal I}_2\supseteq\cdots\supset{\cal I}_l$, with ${\cal I}_l$ consisting
of a unique multiindex
\begin{equation}\label{unique}
J=(j_1,\dots,j_l,j_{l+1},\dots,j_k)
\end{equation}
in the sum \eqref{sum}, such that
\begin{equation}\label{jConditions}
\left\{\begin{array}{ll}
j_r=\max_{{\cal I}_{r-1}}\{i_r\}\ \text{for}\ r=1,\dots,l,\vspace{0.1in}\\
j_l>i_l\ \text{for all}\ I\in{\cal I}_{l-1}.
\end{array}\right.
\end{equation}

\indent
In the Lemma below we prove an estimate for the term $a_J\xi^J$ that will be used in the
proof of Theorem~\ref{Independence}.

\begin{lemma}\label{IndexChoice} Let $J$ be the multiindex satisfying conditions
\eqref{jConditions}. Then there exists a multiindex
$M=\left(m_1,\dots,m_l,m_{l+1},\dots,m_k\right)$, such that inequality
\begin{equation}\label{Inequality}
\Big|a_J\cdot\left\langle\xi^M\cdot\xi^J\right\rangle\Big|\\
>2\sum_{I\neq J}^n\Big|a_I\cdot\left\langle\xi^M\cdot\xi^I\right\rangle\Big|
\end{equation}
is satisfied.
\end{lemma}
\begin{proof}
\indent
We prove the Lemma by induction with the following inductive statement. For any $1\leq r\leq l$
there exist large enough $m_1,\dots,m_r$, such that
\begin{equation}\label{InductiveInequality}
\sum_{I\in{\cal I}_r}\Big|a_I\cdot\left\langle\xi^M\cdot\xi^I\right\rangle\Big|
>2\sum_{I\notin{\cal I}_r}\Big|a_I\cdot\left\langle\xi^M\cdot\xi^I\right\rangle\Big|.
\end{equation}
To prove the starting point of the induction, i.e. inequality \eqref{InductiveInequality} for $r=1$,
we choose and fix indices $m_r$ for $r=l+1,\dots,k$
so that indices $j_r+m_r$ are even for the multiindex $J$ in \eqref{unique}. Then we choose 
indices $m_r$ for $r=1,\dots l$ so that
\begin{itemize}
\item
indices $j_r+m_r$ are even for the multiindex
$J$ in \eqref{unique},
\item
index $m_1$ is large enough for the inequality
\begin{equation*}
\sum_{I\in{\cal I}_1}\Big|a_I\cdot\left\langle\xi^M\cdot\xi^I\right\rangle\Big|
>2\sum_{I\notin{\cal I}_1}\Big|a_I\cdot\left\langle\xi^M\cdot\xi^I\right\rangle\Big|
\end{equation*}
to be satisfied.
\end{itemize}
It is possible because according to equality
\begin{equation*}
\left\langle\xi^n\right\rangle=n!!
\end{equation*}
from (\cite{C}, \S10.5) we have equality
\begin{equation}\label{TermExpectation}
\left\langle\xi^M\cdot a_J\xi^J\right\rangle
=a_J\prod_{r=1}^l\left(j_r+m_r\right)!!\prod_{r=l+1}^k\left(i_r+m_r\right)!!,
\end{equation}
and since $j_1>\max_{I\notin{\cal I}_1}\{i_1\}$, we obtain
$$\left(j_1+m_1\right)!!>m_1\cdot\left(i_1+m_1\right)!!$$
for any $I=(i_1,\dots,i_k)\notin{\cal I}_1$.\\
\indent
To prove the step of induction we assume that for a fixed $r<l$ the indices $m_1,\dots,m_r$ are chosen so that inequality \eqref{InductiveInequality} is satisfied .
Then applying inequality
$$\left(j_{r+1}+m_{r+1}\right)!!>m_{r+1}\cdot\left(i_{r+1}+m_{r+1}\right)!!$$
for any $I=(i_1,\dots,i_k)\notin{\cal I}_{r+1}$, we choose $m_{r+1}$ large enough for the equality
\eqref{InductiveInequality} to be satisfied for $r+1$.\\
\indent
Finally, for $r=l$ equalities \eqref{InductiveInequality} and \eqref{Inequality} coincide.
\end{proof}

\indent
Combining condition \eqref{Inequality} from Lemma~\ref{IndexChoice} with equality
\begin{equation*}
\Big\langle\xi^M\cdot S(\xi)\Big\rangle=0
\end{equation*}
for the multiindex $M$ constructed in Lemma~\ref{IndexChoice}, we obtain equality $a_J=0$.
Continuing the process described in Lemma~\ref{IndexChoice} we prove that $a_I=0$
for all multiindices in sum \eqref{sum}, since the number of terms in \eqref{sum} is finite.
\end{proof}

\section{Asymptotic Estimates.}

\indent
In this section we prove the convergence of the asymptotic series \eqref{Series} under the assumption
of smallness of $\sigma$. To simplify the computations in this section we restrict ourselves
to the case $L=\triangle$, i.e. $\langle Y\rangle=0$, though the computations are similar
for an arbitrary elliptic second order operator.\\
\indent
Substituting expressions \eqref{K-L-partial} and \eqref{Series} into equation \eqref{BVP},
and separating powers with respect to random variables $\xi_1,\dots,\xi_k$,
Zhang and Lu obtain the following sequence of boundary value problems for $m=0,1,\dots$.

\begin{lemma}\label{Sequence} (\cite{ZL})\ The sequence of functions $u^{(m)}$ from \eqref{sum}
satisfies the following sequence of boundary value problems for $m=0,1,\dots$
\begin{equation}\label{BVPSequence}
\begin{aligned}
&\left\{
\begin{array}{lll}
u^{(0)}_t(x,t)- \triangle u^{(0)}(x,t) = g(x,t),\vspace{0.1in}\\
u(x,0)=u_0(x),\vspace{0.1in}\\
u(x,t)\Big|_{bD}=0,
\end{array}\right.\\
&\hspace{0.6in}\cdots,\\
&\left\{
\begin{array}{lll}
u^{(m)}_t(x,t,\xi)-\triangle u^{(m)}(x,t,\xi) = g^{(m)}(x,t,\xi),\vspace{0.1in}\\
u^{(m)}(x,0,\xi)=0,\vspace{0.1in}\\
u^{(m)}(x,t,\xi)\Big|_{bD}=0,
\end{array}\right.
\end{aligned}
\end{equation}
where
\begin{multline}\label{right-hand}
g^{(m)}(x,t,\xi)=\nabla Y_k(x,\xi)\cdot \nabla u^{(m-1)}(x,t,\xi)\\
-\sum_{j=1}^m\frac{(-1)^j}{j!}Y_k^j(x,\xi) u_t^{(m-j)}(x,t,\xi)
+\frac{(-1)^m}{m!}g(x,t)Y_k^m(x,\xi).\\
\end{multline}
\end{lemma}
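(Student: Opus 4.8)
The plan is to substitute the truncated Karhunen--Lo\`eve expansion \eqref{K-L-partial} and the perturbation series \eqref{Series} directly into \eqref{BVP} and then collect terms according to their degree of homogeneity in the variables $\xi_1,\dots,\xi_k$. First I would rewrite the elliptic part using the product rule: since $\nabla_x e^{Y_k}=e^{Y_k}\nabla_x Y_k$, we have
\[
\nabla_x\cdot\left(e^{Y_k}\nabla_x u\right)=e^{Y_k}\left(\triangle u+\nabla Y_k\cdot\nabla u\right),
\]
so that after multiplying \eqref{BVP} through by $e^{-Y_k}$ the equation takes the form
\[
e^{-Y_k}u_t-\triangle u-\nabla Y_k\cdot\nabla u=e^{-Y_k}g.
\]
This is the convenient form, because only the factors multiplying $u_t$ and $g$ now carry an exponential, and expanding $e^{-Y_k}=\sum_{j\geq 0}\tfrac{(-1)^j}{j!}Y_k^j$ produces exactly the signed powers appearing in \eqref{right-hand}.

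Next I would insert the series $u=\sum_{p\geq 0}u^{(p)}$ together with the expansion of $e^{-Y_k}$, using the key structural fact that $Y_k$ is homogeneous of degree $1$ in $\xi$ while each $u^{(p)}$ is homogeneous of degree $p$ by \eqref{u_homogeneous}. Consequently $Y_k^j\,u_t^{(p)}$ is homogeneous of degree $j+p$, and $\nabla Y_k\cdot\nabla u^{(p)}$ of degree $p+1$. Collecting all contributions of total degree $m$ gives, on the left,
\[
\sum_{j=0}^{m}\frac{(-1)^j}{j!}Y_k^j\,u_t^{(m-j)}-\triangle u^{(m)}-\nabla Y_k\cdot\nabla u^{(m-1)},
\]
and on the right the single term $\tfrac{(-1)^m}{m!}Y_k^m g$, since $g$ is deterministic and hence of degree $0$. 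Isolating the $j=0$ summand $u^{(m)}_t$ and moving the remaining terms to the right-hand side yields precisely the interior equation of \eqref{BVPSequence} with $g^{(m)}$ as in \eqref{right-hand}. The initial and boundary data split in the same way: since $u_0$ is deterministic one assigns $u^{(0)}(x,0)=u_0(x)$ and $u^{(m)}(x,0)=0$ for $m\geq 1$, while $u^{(m)}\big|_{bD}=0$ for every $m$.

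The one point requiring care is the justification that one may equate the homogeneous components of a given degree on the two sides of the equation. The substituted equation is an identity between polynomials in $\xi_1,\dots,\xi_k$ whose coefficients are functions of $(x,t)$; that the degree-$m$ parts must agree separately is exactly the assertion that monomials of distinct total degree are linearly independent, which follows from Theorem~\ref{Independence} applied pointwise in $(x,t)$. I expect this to be the main, and essentially the only, conceptual obstacle, the remainder being the bookkeeping of the Cauchy product of the two series. The convergence of the resulting series is a separate issue, addressed in Theorem~\ref{Estimate}, so at this stage the manipulation of the expansion of $e^{-Y_k}$ may be carried out formally.
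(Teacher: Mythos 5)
Your proposal is correct and follows essentially the same route as the paper, which (citing \cite{ZL}) simply substitutes \eqref{K-L-partial} and \eqref{Series} into \eqref{BVP} and separates powers of $\xi_1,\dots,\xi_k$; your multiplication by $e^{-Y_k}$ and degree-$m$ bookkeeping reproduces \eqref{right-hand} exactly. You in fact supply more detail than the paper does, in particular the observation that the separation into homogeneous components is licensed by Theorem~\ref{Independence}.
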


\indent
We continue with the following Lemma, in which we use estimate \eqref{Y-partial-Estimate}
to prove an estimate for $Y_k(x,\xi)=Y_k(x,\omega)$.

\begin{lemma}\label{C-1Estimate} The following estimate holds with some $C_1>0$
\begin{equation}\label{Y-Estimate}
\sup_{x\in D}\left(\left|Y_k(x,\xi)\right|+\sup_{r}\left|\frac{\partial Y_k}{\partial x_r}(x,\xi)\right|\right)
\leq C_1\cdot\sigma\left|\xi\right|,
\end{equation}
where $|\xi| \equiv \left(\xi_1^2 + \cdots + \xi_k^2\right)^{1/2}$.
\end{lemma}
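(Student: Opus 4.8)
The plan is to treat both terms on the left of \eqref{Y-Estimate} by a single mechanism: express $Y_k$ and each of its spatial derivatives as a finite linear combination of the variables $\xi_i$, apply the Cauchy--Schwarz inequality exactly as in \eqref{Y-partial-Estimate} to split off the factor $|\xi|$, and then bound the remaining deterministic coefficient sums by diagonal values of the covariance kernel using Mercer's theorem. The key point is to group the Cauchy--Schwarz factors so that all the $x$-dependence collects with the eigenvalues.

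First I would estimate the zeroth-order term. Grouping $\left(\sqrt{\lambda_i}\,f_i(x)\right)$ against $\left(\xi_i\right)$ in \eqref{K-L-partial} gives
\begin{equation*}
\left|Y_k(x,\xi)\right|
\leq\left(\sum_{i=1}^k\lambda_i f_i^2(x)\right)^{1/2}\left(\sum_{i=1}^k\xi_i^2\right)^{1/2}
=\left(\sum_{i=1}^k\lambda_i f_i^2(x)\right)^{1/2}|\xi|.
\end{equation*}
The Mercer expansion $C(x,y)=\sum_{i=1}^\infty\lambda_i f_i(x)f_i(y)$ of the kernel, evaluated on the diagonal, yields $\sum_{i=1}^\infty\lambda_i f_i^2(x)=C(x,x)=\sigma^2$, so the partial sum is at most $\sigma^2$ for every $x\in D$ and therefore $\sup_{x\in D}\left|Y_k(x,\xi)\right|\leq\sigma|\xi|$.

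Next, for each coordinate $r$ I would differentiate \eqref{K-L-partial} under the finite sum and repeat the computation, obtaining
\begin{equation*}
\left|\frac{\partial Y_k}{\partial x_r}(x,\xi)\right|
\leq\left(\sum_{i=1}^k\lambda_i\left(\frac{\partial f_i}{\partial x_r}(x)\right)^2\right)^{1/2}|\xi|.
\end{equation*}
The coefficient sum is again a diagonal value, this time of the mixed second derivative of the kernel, since $\sum_{i=1}^\infty\lambda_i\left(\frac{\partial f_i}{\partial x_r}(x)\right)^2=\left(\partial_{x_r}\partial_{y_r}C\right)(x,x)$, where the right-hand side denotes $\partial_{x_r}\partial_{y_r}C(x,y)$ evaluated at $y=x$. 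For the Gaussian covariance \eqref{Covariance} a direct differentiation gives $\left(\partial_{x_r}\partial_{y_r}C\right)(x,x)=\sigma^2/\eta^2$ for every $r$, whence $\sup_{x\in D}\sup_r\left|\frac{\partial Y_k}{\partial x_r}(x,\xi)\right|\leq\frac{\sigma}{\eta}|\xi|$. Adding the two bounds proves \eqref{Y-Estimate} with $C_1=1+\eta^{-1}$, a constant independent of $k$.

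The hard part will be the justification of the differentiated Mercer identity invoked in the third step: one must check that the eigenfunction series for $C$ may be differentiated term by term in each argument and that the differentiated series still converges on the diagonal to $\partial_{x_r}\partial_{y_r}C$. This is precisely where the smoothness of the covariance enters — the kernel \eqref{Covariance} is real-analytic, and elliptic regularity transfers this smoothness to the eigenfunctions $f_i$ — so Mercer's theorem is available in the $C^1$ setting and the termwise differentiation is legitimate. Should one wish to avoid the differentiated Mercer theorem, the estimate also follows directly from \eqref{Y-partial-Estimate} by bounding $f_i(x)$ and $\frac{\partial f_i}{\partial x_r}(x)$ by their suprema over the finite range $1\leq i\leq k$; this more elementary argument produces a $C_1$ that is finite for each fixed $k$ but is permitted to grow with $k$, in line with the behavior of the solution estimate \eqref{xi-Estimate} discussed in Section~\ref{TheoreticalRate}.
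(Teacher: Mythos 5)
Your proof is correct for the covariance \eqref{Covariance}, but it takes a genuinely different route from the paper's, and it actually proves a stronger statement. The paper first bounds all eigenfunctions and their first derivatives uniformly by a single constant $C_0$ as in \eqref{EigenEstimate}, pulls that constant out of the sum, and only then applies Cauchy--Schwarz in the grouping $\bigl(\sqrt{\lambda_i}\bigr)$ versus $\bigl(\xi_i\bigr)$ together with $\sum_{i=1}^k\lambda_i\leq\sigma^2\vol(D)$; this yields $C_1=C_0\sqrt{\vol(D)}$, where $C_0$ depends on $k$ and, as the paper itself notes in Section~\ref{TheoreticalRate}, tends to infinity with $k$. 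You instead keep the $x$-dependence attached to the eigenvalues in the Cauchy--Schwarz step and control $\sum_i\lambda_i f_i^2(x)$ and $\sum_i\lambda_i\left(\partial f_i/\partial x_r\right)^2$ by diagonal values of $C$ and of $\partial_{x_r}\partial_{y_r}C$, obtaining $C_1=1+\eta^{-1}$ independent of $k$. Your fallback argument in the last paragraph is essentially the paper's proof. Two remarks. First, for the bound you only need the inequality $\sum_{i=1}^k\lambda_i\left(\partial f_i/\partial x_r(x)\right)^2\leq\left(\partial_{x_r}\partial_{y_r}C\right)(x,x)$, not the full differentiated Mercer identity; this follows because the remainder kernel $C(x,y)-\sum_{i=1}^k\lambda_i f_i(x)f_i(y)$ is positive semidefinite and smooth, and any such kernel has nonnegative mixed second derivative on the diagonal, which shortens the justification you flag as the hard part. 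Second, your argument depends essentially on the smoothness of \eqref{Covariance} on the diagonal: for the non-differentiable exponential covariance that actually generates the eigenfunctions quoted in Section~\ref{TheoreticalRate}, the quantity $\left(\partial_{x_r}\partial_{y_r}C\right)(x,x)$ is infinite and your third step fails, so the $k$-uniform constant is a genuine consequence of the Gaussian kernel assumption. With that assumption your version of the lemma would remove the $k$-dependence of $C_1$ that drives the pessimistic discussion of the rate of convergence of \eqref{xi-Estimate} in Section~\ref{TheoreticalRate}, which is a nontrivial improvement worth stating explicitly.
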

\begin{proof} First we notice that for a bounded domain $D$ and a covariance
$C(x,y)\in C^{\infty}(D\times D)$ we have
\begin{equation}\label{EigenEstimate}
\sup_{x\in D}\left(\sup_{1\leq i\leq k}\left(|f_i(x)|
+\sup_{r}\left|\frac{\partial f_i}{\partial x_r}(x)\right|\right)\right)=C_0<\infty
\end{equation}
for some $C_0>0$.
Then from equality \eqref{K-L-partial} and estimate \eqref{EigenEstimate} we obtain
\begin{equation*}
\sup_{x\in D}\left(\left|Y_k(x,\xi)\right|+\sup_{r}\left|\frac{\partial Y_k}{\partial x_r}(x,\xi)\right|\right)
\leq C_0\sum_{i=1}^k \sqrt{\lambda_i}  |\xi_i|,
\end{equation*}
and using Cauchy-Schwarz inequality,
\begin{equation*}
\sup_{x\in D}\left(\left|Y_k(x,\xi)\right|+\sup_{r}\left|\frac{\partial Y_k}{\partial x_r}(x,\xi)\right|\right)
\leq C_0\left(\sum_{i=1}^k \lambda_i\right)^{1/2} \left(\sum_{i=1}^k\xi_i^2\right)^{1/2}\\
\leq \left(C_0\sqrt{\vol(D)}\right)\sigma |\xi|,
\end{equation*}
which implies \eqref{Y-Estimate} with $C_1=C_0\sqrt{\vol(D)}$.
\end{proof}

\indent
Theorem below represents the main result of the article - estimates on convergence of
series \eqref{Series}.

\begin{theorem}\label{Estimate}
The following estimates are satisfied by the sequence of solutions $u^{(m)}$ of the equations in
\eqref{BVPSequence}:
\begin{equation}\label{xi-Estimate}
\int_{\R^k}\left[\esssup_{0\leq t\leq T}\left\|u^{(m)}\right\|_{H^1_0(D)}(\xi)
+\left\|u^{(m)}_t\right\|_{L^2\left((0,T);L^2(D)\right)}(\xi)\right]\phi(\xi)P(\xi) d\xi\leq C^{2m+1}\sigma^m,
\end{equation}
where
$$\phi(\xi)=\frac{1}{(2\pi)^{k/2}}e^{-\frac{1}{2}|\xi|^2}$$
is the normal probability density function, $P(\xi)$ is a polynomial, $C$ is a constant depending
on $Y$, $P$ and $T$, and $\sigma^2$ is the variance of $Y$.
\end{theorem}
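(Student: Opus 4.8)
The plan is to establish estimate~\eqref{xi-Estimate} by induction on $m$, using the sequence of boundary value problems~\eqref{BVPSequence} together with standard energy estimates for the heat equation. For the base case $m=0$, the function $u^{(0)}$ solves a deterministic heat equation with right-hand side $g$ and initial data $u_0$, so the parabolic energy estimate immediately yields a bound $\esssup_{0\leq t\leq T}\|u^{(0)}\|_{H^1_0(D)}+\|u^{(0)}_t\|_{L^2((0,T);L^2(D))}\leq C_2$ with a constant depending only on the data and $T$; since $u^{(0)}$ is deterministic, the integral against $\phi(\xi)P(\xi)$ simply multiplies this by a fixed polynomial moment, giving the claimed form $C^{1}\sigma^0$.

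For the inductive step I would assume the estimate holds for all indices up to $m-1$ and apply the energy estimate to the $m$-th problem in~\eqref{BVPSequence}, whose right-hand side is $g^{(m)}$ given by~\eqref{right-hand}. The key analytic input is the standard a priori bound
\begin{equation*}
\esssup_{0\leq t\leq T}\|u^{(m)}\|_{H^1_0(D)}(\xi)+\|u^{(m)}_t\|_{L^2((0,T);L^2(D))}(\xi)\leq C_3\,\|g^{(m)}\|_{L^2((0,T);L^2(D))}(\xi),
\end{equation*}
valid pointwise in $\xi$ with zero initial data. I would then estimate each of the three groups of terms in~\eqref{right-hand} separately. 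The gradient term $\nabla Y_k\cdot\nabla u^{(m-1)}$ is controlled using Lemma~\ref{C-1Estimate}, which bounds $|\nabla Y_k|$ by $C_1\sigma|\xi|$, so this contributes a factor $C_1\sigma|\xi|$ times the $H^1_0$-norm of $u^{(m-1)}$. The sum $\sum_{j=1}^m\frac{(-1)^j}{j!}Y_k^j u_t^{(m-j)}$ uses the bound $|Y_k|\leq C_1\sigma|\xi|$ raised to the $j$-th power against the time-derivative norms of $u^{(m-j)}$, and the final term is a clean $\frac{\sigma^m}{m!}$-type contribution from $g\,Y_k^m$.

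The main obstacle, and the reason the polynomial $P(\xi)$ and the unusual constant $C^{2m+1}$ appear, is bookkeeping the powers of $|\xi|$ that accumulate when the pointwise-in-$\xi$ bounds are assembled. Each application of Lemma~\ref{C-1Estimate} injects a factor $|\xi|$, so after $m$ inductive steps the integrand carries a polynomial in $|\xi|$ of degree growing with $m$; this is precisely the role of $P(\xi)$ in~\eqref{xi-Estimate}, and to close the induction cleanly one must carry the polynomial weight \emph{inside} the inductive hypothesis rather than bound it at the end. The delicate point is to track how the degree and coefficients of this polynomial evolve so that, after integrating against the Gaussian density $\phi(\xi)$ (whose moments $\int_{\R^k}|\xi|^{2p}\phi(\xi)\,d\xi$ are finite and grow only factorially), one recovers a bound of the stated shape $C^{2m+1}\sigma^m$. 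I would organize this by strengthening the inductive statement to assert a bound of the form $\|u^{(m)}\|(\xi)\leq A_m\sigma^m P_m(|\xi|)$ for an explicitly propagated polynomial $P_m$ and constant $A_m$, then verify at the end that the convolution of the Gaussian moments with the geometric growth of $A_m$ yields the single constant $C$ absorbing $Y$, $P$, and $T$, with the $\sigma^m$ factor surviving intact because each term in~\eqref{right-hand} contributes at least one power of $\sigma$ through $Y_k$.
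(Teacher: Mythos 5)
Your proposal follows essentially the same route as the paper: the paper's Proposition~\ref{StepEstimate} is exactly the strengthened pointwise-in-$\xi$ inductive statement you describe (with weight $|\xi|^m$ and constant $C_2^{2m+1/2}\sigma^m$), proved by bounding each term of $g^{(m)}$ in \eqref{right-hand} via Lemma~\ref{C-1Estimate} and then invoking the parabolic energy estimate, after which the theorem follows by integrating against the Gaussian density. One small correction: in the statement, $P(\xi)$ is a given moment polynomial on which the constant $C$ is allowed to depend, not the polynomial weight accumulated during the induction --- that accumulated factor $|\xi|^m$ is absorbed into $C$ after integration --- but this does not change the structure of your argument.
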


\indent
We prove Theorem~\ref{Estimate} as a corollary of the following proposition.

\begin{proposition}\label{StepEstimate}
For the sequence of solutions $u^{(m)}$ of equations in \eqref{BVPSequence} there exists a constant
$ C_2$ depending on $Y_k$, $g(x,t)$, and $T$, such that the following estimates hold
\begin{equation}\label{Fixed-xi-Estimate}
\esssup_{0\leq t\leq T}\left\|u^{(m)}\right\|_{H^1_0(D)}(\xi)
+\left\|u^{(m)}_t\right\|_{L^2\left((0,T);L^2(D)\right)}(\xi)\leq
\begin{cases}
C_2^{2m+1/2}\sigma^m|\xi|^m & \text{if }|\xi|\geq 1,\vspace{0.1in}\\
C_2^{2m+1/2}\sigma^m & \text{otherwise}.
\end{cases}
\end{equation}
\end{proposition}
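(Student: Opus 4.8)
The plan is to prove \eqref{Fixed-xi-Estimate} by induction on $m$, treating the two regimes $|\xi|\geq 1$ and $|\xi|<1$ simultaneously through the common bound $\sigma^m(\max\{1,|\xi|\})^m$. First I would invoke the standard parabolic energy (improved-regularity) estimate for the heat equation: since each $u^{(m)}$ solves a problem in \eqref{BVPSequence} with zero initial data (for $m\geq 1$) and right-hand side $g^{(m)}$, there is a constant $C_*=C_*(D,T)$ with
$$\esssup_{0\leq t\leq T}\left\|u^{(m)}\right\|_{H^1_0(D)}(\xi)+\left\|u^{(m)}_t\right\|_{L^2((0,T);L^2(D))}(\xi)\leq C_*\left\|g^{(m)}\right\|_{L^2((0,T);L^2(D))}(\xi).$$
This reduces the proposition to bounding the norm of $g^{(m)}$ from \eqref{right-hand}; the required regularity $g^{(m)}\in L^2((0,T);L^2(D))$ is inherited from the inductive hypothesis. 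For the base case $m=0$ the function $u^{(0)}$ is deterministic, and the same estimate bounds the left-hand side by a fixed constant depending on $g$, $u_0$, $D$, $T$; I would then choose $C_2$ at least large enough that this constant is $\leq C_2^{1/2}$, which is the claim for $m=0$ since $|\xi|^0=1$.

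For the inductive step I would bound each of the three groups of terms in \eqref{right-hand} in the $L^2((0,T);L^2(D))$ norm. Using the $L^\infty(D)$ bounds $|Y_k|\leq C_1\sigma|\xi|$ and $|\nabla Y_k|\leq C_1\sigma|\xi|$ furnished by Lemma~\ref{C-1Estimate} and \eqref{Y-Estimate}, together with $\left\|\nabla u^{(m-1)}\right\|_{L^2(D)}\leq\left\|u^{(m-1)}\right\|_{H^1_0(D)}$ integrated over $[0,T]$, the advection term contributes $\leq C_1\sigma|\xi|\,\sqrt{T}\,\esssup_t\left\|u^{(m-1)}\right\|_{H^1_0}$, the sum $\sum_j\frac{1}{j!}Y_k^j u_t^{(m-j)}$ contributes $\leq\sum_{j=1}^m\frac{(C_1\sigma|\xi|)^j}{j!}\left\|u^{(m-j)}_t\right\|_{L^2L^2}$, and the forcing term contributes $\leq\frac{(C_1\sigma|\xi|)^m}{m!}\left\|g\right\|_{L^2L^2}$. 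Inserting the inductive bounds $C_2^{2(m-j)+1/2}\sigma^{m-j}(\max\{1,|\xi|\})^{m-j}$, the powers of $\sigma$ and $|\xi|$ coming from $Y_k$ combine with the inductive ones so that \emph{every} term carries $\leq\sigma^m(\max\{1,|\xi|\})^m$, while the $C_2$-powers reduce to $C_2^{2(m-j)+1/2}$ in the $j$-th term. Here the case split $|\xi|\geq 1$ versus $|\xi|<1$ is exactly what makes $\max\{1,|\xi|\}^m$ the correct common bound: when $|\xi|\geq 1$ one has $|\xi|^{m-j}|\xi|^j=|\xi|^m$, whereas when $|\xi|<1$ all surplus $|\xi|$-powers are absorbed into $1$.

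The main work — and the place where the exponent $2m+\tfrac12$ is engineered to pay off — is collecting constants uniformly in $m$. After multiplying by $C_*$ and dividing the resulting bound by $C_2^{2m+1/2}$, the three groups become, respectively, $C_*C_1\sqrt{T}\,C_2^{-2}$, the series $C_*\sum_{j=1}^m\frac{(C_1/C_2^2)^j}{j!}\leq C_*\big(e^{C_1/C_2^2}-1\big)$, and $C_*\left\|g\right\|_{L^2L^2}\,C_2^{-1/2}\frac{(C_1/C_2^2)^m}{m!}$. Each of these tends to $0$ as $C_2\to\infty$ \emph{uniformly in $m$}, because the extra factor $C_2^{-2}$ per order of $Y_k$ dominates the single power of $\sigma|\xi|$ that order produces; hence one may fix $C_2$ — depending only on $C_*$, $C_1$, $T$ and $\left\|g\right\|_{L^2L^2}$, and so on $Y_k$, $g$ and $T$ — so large that the sum of the three is $\leq 1$. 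This yields the bound $C_2^{2m+1/2}\sigma^m(\max\{1,|\xi|\})^m$ for $u^{(m)}$ and closes the induction. I expect the delicate point to be precisely this uniform-in-$m$ verification: confirming that after the bookkeeping the surviving factor is the geometric $(C_1/C_2^2)^j$ rather than something growing with $m$, and controlling the factorially small forcing contribution $\frac{C_1^m}{m!}\left\|g\right\|$ — in short, getting the exponents of $C_2$ in the inductive hypothesis right so that one constant suffices for all $m$ at once.
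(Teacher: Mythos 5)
Your proposal is correct and follows essentially the same route as the paper: induction on $m$, the Evans improved-regularity estimate to reduce to bounding $\left\|g^{(m)}\right\|_{L^2((0,T);L^2(D))}$, the sup-norm bounds on $Y_k$ and $\nabla Y_k$ from Lemma~\ref{C-1Estimate} combined with the inductive hypothesis termwise in \eqref{right-hand}, and the choice of a single large $C_2$ whose exponent $2m+\tfrac12$ absorbs all constants uniformly in $m$. Your explicit treatment of the two regimes via $\max\{1,|\xi|\}$ and the uniform-in-$m$ bookkeeping is just a cleaner rendering of the paper's chain of inequalities \eqref{m-Estimate}.
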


\indent
{\it Proof of Proposition~\ref{StepEstimate}.} We prove Proposition~\ref{StepEstimate} by induction
with respect to $m$. Using estimate \eqref{Y-Estimate} and inductive assumption
in formula \eqref{right-hand} for the right-hand side of equation of order $m$ we obtain
\begin{multline}\label{m-Estimate}
\left\|g^{(m)}\right\|_{L^2((0, T); L^2(D))}(\xi)\\
= \left\|\nabla Y_k\cdot \nabla u^{(m-1)}-\sum_{j=1}^m\frac{(-1)^j}{j!}Y_k^j u_t^{(m-j)}
+\frac{(-1)^m}{m!}g(x,t)Y_k^m\right\|_{L^2\left((0,T);L^2(D)\right)}(\xi)\\
\leq \frac{\left\|g(x,t)\right\|_{L^2\left((0,T);L^2(D)\right)}}{m!}C_1^m\cdot \sigma^m|\xi|^m\\
+\left\|\sum_{r=1}^n\frac{\partial u^{(m-1)}}{\partial x_r}
\cdot \frac{\partial Y_k}{\partial x_r}\right\|_{L^2\left((0,T);L^2(D)\right)}(\xi)
+\sum_{j=1}^m \frac{1}{j!}\left\|u_t^{(m-j)}\cdot Y_k^j\right\|_{L^2\left((0,T);L^2(D)\right)}(\xi)\\
\leq \frac{\left\|g(x,t)\right\|_{L^2\left((0,T);L^2(D)\right)}}{m!}C_1^m\cdot \sigma^m|\xi|^m
+ \sqrt{dT}\cdot C_1\cdot C_2^{2(m-1)+1/2}\sigma^m|\xi|^m\\
+\sum_{j=1}^m \frac{1}{j!}\left\|u_t^{(m-j)}\cdot Y_k^j\right\|_{L^2\left((0,T);L^2(D)\right)}(\xi)\\
\leq \frac{\left\|g(x,t)\right\|_{L^2\left((0,T);L^2(D)\right)}}{m!}C_1^m\cdot \sigma^m|\xi|^m
+ \sqrt{dT}\cdot C_1\cdot C_2^{2(m-1)+1/2}\sigma^m|\xi|^m\\
+\sum_{j=1}^m \frac{1}{j!}C_1^j\cdot  C_2^{2(m-j)+1/2}\sigma^m|\xi|^m\\
\leq  C_2^{2m-1/2}\left(\|g(x,t)\|_{L^2\left((0,T);L^2(D)\right)}+\sqrt{dT}
+\sum_{j=1}^m \frac{1}{j!}\right)\sigma^m|\xi|^m
\leq  C_2^{2m}\sigma^m|\xi|^m,
\end{multline}
where we also assumed without loss of generality that
$$C_2^{1/2}>\max\left\{C_1,\left(\|g(x,t)\|_{L^2\left((0,T);L^2(D)\right)}+\sqrt{dT}
+\sum_{j=1}^m \frac{1}{j!}\right)\right\}.$$
\indent
To complete the step of induction we need to obtain necessary estimates for the solution
of equation of order $m$. To obtain those estimates for a fixed $\xi$ we use the following theorem
from \cite{E}.

\begin{theorem}\label{EvansEstimate} (Theorem 5 in \S 7.1 of \cite{E})\
Let $u_0\in H^1_0(D)$ and $g\in L^2\left((0,T);L^2(D)\right)$. Then a weak solution $u$ of
the parabolic equation with smooth coefficients
\begin{equation*}
\left\{\begin{array}{lll}
u_t-\nabla_x\cdot\left(A(x)\nabla_x u(x,t)\right)=g(x,t),\vspace{0.1in}\\
u(x,0)=u_0(x),\vspace{0.1in}\\
u(x,t)\Big|_{bD}=0,
\end{array}\right.
\end{equation*}
satisfies the following estimate
\begin{equation*}
\esssup_{0\leq t\leq T}\left\|u\right\|_{H^1_0(D)}
+\left\|u_t\right\|_{L^2\left((0,T);L^2(D)\right)}
\leq C\left(\left\|g\right\|_{L^2\left((0,T);L^2(D)\right)}+\left\|u_0\right\|_{H^1_0(D)}\right)
\end{equation*}
with the constant $C$ depending only on $D$, $T$, and $A(x)$.
\end{theorem}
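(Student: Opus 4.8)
The plan is to prove the estimate by the standard Galerkin method combined with energy estimates, following the scheme of \S7.1 of \cite{E}. First I would fix a basis $\{w_j\}_{j=1}^\infty$ of $H^1_0(D)$ consisting of eigenfunctions of the elliptic operator $v\mapsto -\nabla_x\cdot(A(x)\nabla_x v)$; these are orthogonal in $H^1_0(D)$ and orthonormal in $L^2(D)$, a choice that will be essential for controlling the initial data in the $H^1_0$-norm. I would then seek finite-dimensional approximations $u_N(t)=\sum_{j=1}^N d_N^j(t)\,w_j$ solving the projected system obtained by testing the equation against $w_1,\dots,w_N$, with initial values $d_N^j(0)=(u_0,w_j)_{L^2(D)}$. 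Existence and uniqueness of each $u_N$ on $[0,T]$ follows from the linear ODE theory applied to the coefficient vector $(d_N^1,\dots,d_N^N)$.

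The heart of the argument is an a priori energy estimate uniform in $N$. Here I would test the projected equation against $u_N'=\partial_t u_N$ rather than against $u_N$, which is precisely what yields control of $u_t$ in $L^2\left((0,T);L^2(D)\right)$. Writing $B[v,w]=\int_D A\nabla v\cdot\nabla w\,dx$ and using that $A$ is symmetric and $t$-independent, this produces the identity
\begin{equation*}
\|u_N'(t)\|_{L^2(D)}^2+\tfrac12\frac{d}{dt}B[u_N(t),u_N(t)]=\int_D g\,u_N'\,dx,
\end{equation*}
after which Cauchy's inequality absorbs $\int_D g\,u_N'\,dx$ into $\tfrac12\|u_N'\|_{L^2(D)}^2+\tfrac12\|g\|_{L^2(D)}^2$. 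Integrating in time over $(0,T)$ and invoking the ellipticity and boundedness of $A$ to compare $B[\cdot,\cdot]$ with $\|\nabla\cdot\|_{L^2(D)}^2$ gives a bound on $\|u_N'\|_{L^2\left((0,T);L^2(D)\right)}$ and on $\esssup_{0\le t\le T}\|u_N(t)\|_{H^1_0(D)}$ in terms of $\|g\|_{L^2\left((0,T);L^2(D)\right)}$ and $B[u_N(0),u_N(0)]$. The eigenfunction basis makes $u_N(0)$ the $H^1_0$-orthogonal truncation of $u_0$, so $B[u_N(0),u_N(0)]\le B[u_0,u_0]\le C\|u_0\|_{H^1_0(D)}^2$, closing the estimate with a constant depending only on $D$, $T$, and $A$.

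Finally I would pass to the limit $N\to\infty$: the uniform bounds permit extraction of a subsequence with $u_N\rightharpoonup u$ weakly-$*$ in $L^\infty\left((0,T);H^1_0(D)\right)$ and $u_N'\rightharpoonup u'$ weakly in $L^2\left((0,T);L^2(D)\right)$, and a routine density argument identifies $u$ as the weak solution. The stated estimate then follows from the weak (respectively weak-$*$) lower semicontinuity of the norms. The main obstacle is obtaining the $\esssup$-in-time bound on $\|u\|_{H^1_0(D)}$ and the $L^2$ bound on $u_t$ simultaneously: this is exactly the point where testing against $u_N'$---rather than against $u_N$, which would yield only the weaker $L^\infty\left((0,T);L^2(D)\right)\cap L^2\left((0,T);H^1_0(D)\right)$ regularity---is indispensable, and where the hypothesis $u_0\in H^1_0(D)$ is genuinely used.
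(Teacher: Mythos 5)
The paper offers no proof of this statement---it is quoted as Theorem 5 in \S 7.1 of Evans \cite{E}---and your Galerkin argument, with the key energy identity obtained by testing the projected equation against $u_N'$ rather than $u_N$, is precisely the proof given there. The sketch is correct (granting, as the paper's application does, that $A$ is symmetric and time-independent), so the proposal matches the intended proof in all essentials.
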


\indent
Applying the above theorem  to equation of order $m$ in \eqref{BVPSequence},
using estimate \eqref{m-Estimate} and assuming without loss of generality that $ C_2^{1/2}>C$,
we obtain the first inequality in \eqref{Fixed-xi-Estimate}. The second inequality is proved similarly.
\qed\\

\indent
{\it Proof of Theorem~\ref{Estimate}.}
To prove Theorem~\ref{Estimate} we use estimate \eqref{Fixed-xi-Estimate} for a fixed $\xi$, and change the constant
$ C_2$ so that after integration with respect to $\xi$ we obtain estimate \eqref{xi-Estimate}. Namely, using estimate
\eqref{Fixed-xi-Estimate} we obtain
\begin{multline*}
\int_{\R^k}\left[\esssup_{0\leq t\leq T}\left\|u^{(m)}\right\|_{H^1_0(D)}
+\left\|u^{(m)}_t\right\|_{L^2\left((0,T);L^2(D)\right)}\right]P(\xi)e^{-\frac{1}{2}|\xi|^2} d\xi\\
\leq  C_2^{2m+1/2}\sigma^m\left[\int_{|\xi|<1}|P(\xi)|e^{-\frac{1}{2}|\xi|^2} d\xi
+\int_{|\xi|>1}|\xi|^m|P(\xi)|e^{-\frac{1}{2}|\xi|^2} d\xi\right].
\end{multline*}
Then, without loss of generality adjusting constant $ C_2$ so that
$$ C_2^{1/2}>\int_{|\xi|<1}|P(\xi)|e^{-\frac{1}{2}|\xi|^2} d\xi
+\int_{|\xi|>1}|\xi|^m|P(\xi)|e^{-\frac{1}{2}|\xi|^2} d\xi,$$
we obtain estimate \eqref{xi-Estimate}.
\qed

\section{Rate of Convergence.}\label{TheoreticalRate}

\indent
As it was mentioned in Introduction, estimate \eqref{xi-Estimate}, though providing
a substantiation for the separation of problem \eqref{BVP} into a sequence of problems \eqref{BVPSequence}, becomes worse as the number of terms in the Karhunen-Lo\`eve
expansion increases. We can see that on a simple 1-D example considered in \cite{ZL}.
Namely, for a 1-D stochastic process on the unit interval with covariance function as
in \eqref{Covariance} the corresponding eigenfunctions can be found explicitly \cite{ZL}:
$$f_n(x)=\frac{\left[\eta w_n\cos{(w_nx)}+\sin{(w_nx)}\right]}{\sqrt{(\eta^2 w_n^2+1)/2+\eta}},$$
where the sequence $w_n\to\infty$ is a sequence of positive roots of the equation
$$(\eta^2w_n^2-1)\sin{w}=2\eta w\cos{w}.$$
\indent
Computing the derivatives we obtain
$$f_n^{\prime}(x)=w_n\frac{\left[-\eta w_n\sin{(w_nx)}+\cos{(w_nx)}\right]}
{\sqrt{(\eta^2 w_n^2+1)/2+\eta}},$$
which implies that
$$\left\|f_n^{\prime}\right\|_{C([0,1]}\to\infty\ \text{as}\ n\to \infty,$$
and therefore in estimates \eqref{EigenEstimate} and \eqref{Y-Estimate}
we have $C_0,\ C_1\to \infty$ as $k\to \infty$.


\begin{thebibliography}{WIDT}
\bibitem[CH]{CH} R. Courant, D. Hilbert, Methods of Mathematical Physics, Interscience, New York, 1953.
\bibitem[C]{C} H. Cram\'er, Mathematical methods of statistics, Princeton Landmarks in Mathematics,
Princeton University Press, Princeton, NJ, 1999.
\bibitem[E]{E} L.C. Evans, Partial Differential Equations, American Mathematical Society, Providence,
RI, 1998.
\bibitem[GS]{GS} R. G. Ghanem, P. D. Spanos, Stochastic Finite Elements:
A Spectral Approach, Springer-Verlag, 1991.
\bibitem[L]{L} M. Lo\`eve, Probability Theory II, Graduate Texts in Mathematics 46,
Springer-Verlag, 1978.
\bibitem[X]{X} D. Xiu, Numerical Methods for Stochastic Computations: a Spectral Method Approach,
Princeton University Press, 2010.
\bibitem[ZL]{ZL}D. Zhang, Z. Lu, An efficient, high-order perturbation approach for
flow in random porous media via Karhunen-Lo\`eve and polynomial expansions, JCF, 194 (2004),
pp. 773-794.
\end{thebibliography}
\end{document}